\documentclass{amsart}
\usepackage{amssymb}
\usepackage[all]{xy}

\newtheorem{theorem}{Theorem}[section]
\newtheorem{lemma}[theorem]{Lemma}
\newtheorem{proposition}[theorem]{Proposition}
\newtheorem{corollary}[theorem]{Corollary}

\theoremstyle{definition}

\theoremstyle{remark}
\newtheorem{remark}[theorem]{Remark}

\DeclareMathOperator{\rank}{rank}

\newcommand{\calM}{\mathcal{M}}
\newcommand{\calL}{\mathcal{L}}
\newcommand{\calE}{\mathcal{E}}
\newcommand{\calO}{\mathcal{O}}
\newcommand{\calU}{\mathcal{U}}
\newcommand{\calF}{\mathcal{F}}
\newcommand{\Ldet}{\mathcal{L}_{\det}}
\newcommand{\frakM}{\mathfrak{M}}
\newcommand{\frakU}{\mathfrak{U}}
\newcommand{\bbZ}{\mathbb{Z}}
\newcommand{\bbC}{\mathbb{C}}
\newcommand{\bbP}{\mathbb{P}}
\newcommand{\rmH}{\mathrm{H}}
\newcommand{\rmR}{\mathrm{R}}
\newcommand{\rmT}{\mathrm{T}}
\newcommand{\Pic}{\mathrm{Pic}}
\newcommand{\NS}{\mathrm{NS}}
\newcommand{\Aut}{\mathrm{Aut}}
\newcommand{\End}{\mathrm{End}}
\newcommand{\Hom}{\mathrm{Hom}}
\newcommand{\Ext}{\mathrm{Ext}}
\newcommand{\GL}{\mathrm{GL}}
\newcommand{\PGL}{\mathrm{PGL}}
\newcommand{\Gm}{\mathbb{G}_{\mathrm{m}}}

\newcommand{\semistab}{\mathrm{ss}}
\newcommand{\semisemistab}{\mathrm{(s)s}}
\newcommand{\stab}{\mathrm{s}}
\newcommand{\id}{\mathrm{id}}
\newcommand{\pr}{\mathrm{pr}}
\newcommand{\tr}{\mathrm{tr}}

\newcommand{\longto}[1][]{\stackrel{#1}{\longrightarrow}}

\begin{document}
\bibliographystyle{plain}

\title[Moduli of vector bundles in positive characteristic]{The Picard group of a coarse moduli space of vector bundles in positive characteristic}
\author{Norbert Hoffmann}
\address{Institut f\"ur Mathematik, Fachbereich Mathematik und Informatik, Freie Universit\"at Berlin, Arnimallee 3, 14195 Berlin, Germany}
\email{norbert.hoffmann@fu-berlin.de}
\subjclass[2010]{14D20, 14H60}
\keywords{moduli space, vector bundle, Picard group, positive characteristic}

\begin{abstract}
  Let $C$ be a smooth projective curve over an algebraically closed field of arbitrary characteristic.
  Let $\frakM_{r, L}^{\semistab}$ denote the projective coarse moduli scheme of semistable rank $r$ vector bundles over $C$ with fixed determinant $L$.
  We prove $\Pic( \frakM_{r, L}^{\semistab}) = \bbZ$, identify the ample generator, and deduce that $\frakM_{r, L}^{\semistab}$ is locally factorial. In characteristic zero, this has already been proved by Dr\'{e}zet and Narasimhan.
  The main point of the present note is to circumvent the usual problems with Geometric Invariant Theory in positive caracteristic.
\end{abstract}
 
\maketitle

\section{Introduction}

In classification problems for algebro-geometric objects, the Picard group of the moduli space is always a very interesting invariant.
Roughly speaking, it measures how many ways there are to assign to each of the objects in question a one-dimensional vector space, in a suitably functorial way.

In the case of vector bundles with fixed determinant over a smooth projective curve over $\bbC$,
Dr\'{e}zet and Narasimhan proved in their famous paper \cite{drezet-narasimhan} that the Picard group of the coarse moduli scheme is canonically isomorphic to $\bbZ$.
This also yields some information on the singularities of the coarse moduli scheme.

Actually the concept of assigning to each object a one-dimensional vector space is formalised in the notion of a line bundle on the moduli functor, or on the moduli stack. 
However, such a line bundle does not always give a line bundle on the coarse moduli scheme. In characteristic zero, a criterion for when it does is given by Kempf's lemma from Geometric Invariant Theory.
But in positive characteristic, there seems to be no general method to produce all line bundles on a coarse moduli scheme, or more generally on a GIT quotient.

The present note answers this question for the projective coarse moduli scheme $\frakM_{r, L}^{\semistab}$ of rank $r$ vector bundles with fixed determinant $L$ on a smooth projective curve.
We work over an algebraically closed field of arbitrary characteristic, but only the case of positive characteristic is new.

Actually there seems to be a little uncertainty about the definition of $\frakM_{r, L}^{\semistab}$, since taking closed subschemes does not commute with forming GIT quotients in general.
We define the three possible coarse moduli schemes in Section \ref{sec:moduli}, and prove that the canonical morphisms between them are isomorphisms.

Section \ref{sec:Pic} contains the main result that the Picard of $\frakM_{r, L}^{\semistab}$ is canonically isomorphic to $\bbZ$.
We also identify the ample generator and deduce that $\frakM_{r, L}^{\semistab}$ is locally factorial. 
The proofs are based on some recent literature on the Picard group of a corresponding moduli stack, together with a theorem of Faltings that there are enough nonabelian theta functions;
the latter allows us to descend line bundles from the moduli stack to the coarse moduli scheme.

Under some assumptions on $\deg( L)$, the Picard group of $\frakM_{r, L}^{\semistab}$ has also been studied in the preprint \cite{joshi-mehta}, using more advanced tools for positive characteristic.

\subsubsection*{Acknowledgements}
I thank V.B. Mehta for encouraging me to write this note. I also thank the referees for some helpful suggestions.
The work was supported by the SFB 647: Raum - Zeit - Materie.

\section{Moduli of vector bundles with fixed determinant} \label{sec:moduli}

Let $k$ be an algebraically closed field of arbitrary characteristic. Let $C$ be a geometrically irreducible smooth projective curve over $k$ of genus $g \geq 2$. Let
\begin{equation*}
  \calM_{r, d} \supseteq \calM_{r, d}^{\semistab} \xrightarrow{\pi_{r, d}} \frakM_{r, d}^{\semistab}
\end{equation*}
denote the moduli stack of vector bundles $E$ of rank $r$ over $C$ with $\deg( E) = d \in \bbZ$, its open substack where $E$ is semistable, and the corresponding coarse moduli scheme, respectively.
$\calM_{r, d}$ is a smooth irreducible Artin stack over $k$, and $\frakM_{r, d}^{\semistab}$ is a normal irreducible projective variety over $k$ \cite[Th\'{e}or\`{e}me 17]{seshadri}
constructed using Geometric Invariant Theory \cite{GIT}. The morphism $\pi_{r, d}$ is universal in the sense that every morphism from $\calM_{r, d}^{\semistab}$ to a scheme factors uniquely through it.

There are several ways of fixing the determinant $\det( E) := \Lambda^r E$.
Let $\Pic^d( C)$ denote the Picard variety of line bundles $L$ of degree $d$ over $C$, and choose one such line bundle $L$. One may consider the scheme-theoretic fiber
\begin{equation*}
  \frakM_{r, L}^{\semistab} \hookrightarrow \frakM_{r, d}^{\semistab}
\end{equation*}
of the morphism $\det \colon \frakM_{r, d}^{\semistab} \to \Pic^d( C)$ over the point $L$. One also has the stacks
\begin{equation*}
  \calM_{r, = L} \twoheadrightarrow \calM_{r, \cong L} \hookrightarrow \calM_{r, d}
\end{equation*}
where the closed substack $\calM_{r, \cong L}$ is the fiber of $\det \colon \calM_{r, d} \to \Pic^d( C)$ over the point $L$, and $\calM_{r, = L}$
is the $\Gm$-torsor over $\calM_{r, \cong L}$ whose fiber over any point $E$ is the space of all isomorphisms $\phi \colon L \to \det( E)$.
So $\calM_{r, \cong L}$ is the moduli stack of all $E$ such that $\det( E) \cong L$, whereas $\calM_{r, = L}$ is the moduli stack of all pairs $( E, \phi)$ containing an isomorphism $\phi \colon L \to \det( E)$.
Since the trace map
\begin{equation} \label{eq:trace}
  \tr \colon \Ext^1( E, E) \longto \rmH^1( C, \calO_C)
\end{equation}
is surjective in any characteristic, $\calM_{r, \cong L}$ is still smooth; the same follows then for $\calM_{r, = L}$.
Using \cite[Section 1.5]{GIT}, the GIT construction of $\frakM_{r, d}^{\semistab}$ carries over to these fixed determinant situations and provides coarse moduli schemes
\begin{equation*}
  \calM_{r,  = L} \supseteq \calM_{r,  = L}^{\semistab} \xrightarrow{\pi_{r,  = L}} \frakM_{r,  = L}^{\semistab} \qquad\text{and}\qquad
  \calM_{r, \cong L} \supseteq \calM_{r, \cong L}^{\semistab} \xrightarrow{\pi_{r, \cong L}} \frakM_{r, \cong L}^{\semistab}
\end{equation*}
for the open substacks $\calM_{r,  = L}^{\semistab}$ and $\calM_{r, \cong L}^{\semistab}$ where $E$ is semistable. 
The morphisms $\pi_{r,  = L}$ and $\pi_{r, \cong L}$ are again universal among morphisms to schemes.

The problem of comparing the three coarse moduli schemes $\frakM_{r,  = L}^{\semistab}$, $\frakM_{r, \cong L}^{\semistab}$ and $\frakM_{r, L}^{\semistab}$ might seem trivial at first sight,
but it involves slightly delicate issues of GIT in arbitrary characteristic.
\begin{proposition} \label{prop:isos}
  The canonical morphisms
  \begin{equation*}
    \frakM_{r,  = L}^{\semistab} \longto \frakM_{r, \cong L}^{\semistab} \longto \frakM_{r, L}^{\semistab}
  \end{equation*}
  given by the universal properties of $\pi_{r,  = L}$ and $\pi_{r, \cong L}$ are isomorphisms.
\end{proposition}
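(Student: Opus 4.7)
My plan handles each of the two morphisms separately, using the common observation that all three schemes parametrize S-equivalence classes of semistable rank-$r$ bundles $E$ with $\det(E) \cong L$, so the maps are bijective on $k$-points. The key algebraic input is that for a bundle $E$ and trivialization $\phi \colon L \to \det(E)$, the scalar automorphism $\lambda \cdot \id_E$ transports $(E, \phi)$ to $(E, \lambda^r \phi)$; since $k$ is algebraically closed and $(k^*)^r = k^*$, any two such pairs over the same $E$ become isomorphic in $\calM_{r,=L}^{\semistab}$.

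For the first arrow $\frakM_{r,=L}^{\semistab} \to \frakM_{r,\cong L}^{\semistab}$, I would construct an inverse by descent along the $\Gm$-torsor $\calM_{r,=L}^{\semistab} \to \calM_{r,\cong L}^{\semistab}$. The observation above shows that $\pi_{r,=L}\colon \calM_{r,=L}^{\semistab} \to \frakM_{r,=L}^{\semistab}$ is $\Gm$-invariant on closed points; since the induced morphism $\Gm \times \calM_{r,=L}^{\semistab} \to \frakM_{r,=L}^{\semistab}$ has reduced source, two morphisms agreeing on $k$-points must agree, so $\pi_{r,=L}$ is genuinely $\Gm$-invariant and descends to $\calM_{r,\cong L}^{\semistab} \to \frakM_{r,=L}^{\semistab}$. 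The universal property of $\pi_{r,\cong L}$ then delivers the required inverse morphism $\frakM_{r,\cong L}^{\semistab} \to \frakM_{r,=L}^{\semistab}$.

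For the second arrow $\frakM_{r,\cong L}^{\semistab} \to \frakM_{r,L}^{\semistab}$, the delicate point is that scheme-theoretic fibers do not commute with GIT quotients in positive characteristic. My plan is to show it is a bijective proper morphism that is an isomorphism over the dense open stable locus, then invoke Zariski's main theorem using normality of the target. The isomorphism on the stable locus follows because both spaces are smooth there with common tangent space $\ker(\tr \colon \Ext^1(E, E) \to \rmH^1(C, \calO_C))$. Normality of $\frakM_{r,L}^{\semistab}$ — the crucial step — I would address by exploiting the $\Pic^0(C)$-action on $\frakM_{r,d}^{\semistab}$ by tensoring: this is equivariant with respect to $\det$, where $\Pic^0(C)$ acts on $\Pic^d(C)$ transitively through the $r$-th power isogeny. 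The resulting étale-local homogeneity of $\det$ combined with normality of $\frakM_{r,d}^{\semistab}$ should transfer normality to the fiber. The hardest step is precisely this normality argument, where the subtleties of GIT in positive characteristic are concentrated.
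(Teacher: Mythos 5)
Your treatment of the first arrow is essentially sound and close in spirit to the paper's: the paper observes at the level of the atlas $Q^{\semistab}_{r,=L}$ that $\mu_r$ acts trivially and $\Gm/\mu_r$ acts freely with quotient $Q^{\semistab}_{r,\cong L}$, so invariant maps factor; your stack-level version (invariance on $k$-points plus reducedness, then descent along the $\Gm$-torsor) accomplishes the same thing.

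The second arrow is where the real difficulty lies, and your plan has a genuine gap there: the homogeneity argument does not prove normality (or even reducedness) of the scheme-theoretic fiber $\frakM_{r,L}^{\semistab} = \det^{-1}(L)$. The $J(C)$-action on $\frakM_{r,d}^{\semistab}$ covers the action on $\Pic^d(C)$ through the map $\xi \mapsto L \otimes \xi^{\otimes r}$, i.e.\ through the multiplication-by-$r$ isogeny, which is \emph{inseparable} when $\characteristic(k)$ divides $r$. Transitivity on $k$-points then only tells you that all scheme-theoretic fibers of $\det$ are abstractly isomorphic to one another; it does not tell you that any one of them is normal. The relative Frobenius $\mathbb{A}^1 \to \mathbb{A}^1$, $x \mapsto x^p$, shows the danger: the total space is smooth, the map is surjective, all fibers are isomorphic, and every fiber is non-reduced. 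So ``\'etale-local homogeneity'' is exactly what is unavailable here, and without normality of the target your appeal to Zariski's main theorem (finite bijective birational onto a normal variety is an isomorphism) cannot get started. Note also that your isomorphism over the stable locus only controls $\det^{-1}(L)$ where $\frakM_{r,d}^{\semistab}$ is smooth; the potential non-reducedness sits precisely over the strictly semistable (singular) locus, which ZMT cannot see past without normality.

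The paper avoids ever having to prove normality of $\det^{-1}(L)$ up front. It uses that the GIT quotient $\pi_{r,d}$ is a \emph{uniform} categorical quotient (it commutes with flat base change, \cite[Theorem A.1.1]{GIT}), and base-changes along the flat morphism $\tau_{r,L} \colon J(C) \to \Pic^d(C)$, $\xi \mapsto L \otimes \xi^{\otimes r}$. This makes $\frakM_{r,d}^{\semistab} \times_{\Pic^d(C)} J(C)$ a categorical quotient of $Q_{r,d}^{\semistab} \times_{\Pic^d(C)} J(C)$, so the $\PGL_N$-invariant tensoring map $(E,B,\xi) \mapsto E \otimes \xi^{-1}$ descends to it; restricting to the fiber over the origin of $J(C)$ produces an explicit morphism $\frakM_{r,L}^{\semistab} \to \frakM_{r,\cong L}^{\semistab}$ inverse to the canonical one. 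Normality of $\frakM_{r,L}^{\semistab}$ is then a consequence, not an input. If you want to salvage your route, you would need some substitute for uniformity of the quotient anyway, so you may as well use it directly as the paper does.
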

\begin{proof}
  Choose an ample line bundle $\calO( 1)$ over $X$, and an integer $m \gg 0$ such that $E( m) := E \otimes \calO( 1)^{\otimes m}$ is globally generated with $\rmH^1( C, E( m)) = 0$
  for every semistable $E$ of rank $r$ and degree $d$; then $\rmH^0( C, E( m))$ has the same dimension $N = r \chi( \calO( m)) + d$ for all such $E$. We have the standard presentation
  \begin{equation*}
    \calM^{\semistab}_{r, d} = [ Q^{\semistab}_{r, d} / \GL_N]
  \end{equation*}
  where $Q^{\semistab}_{r, d}$ is the fine moduli scheme of isomorphism classes of pairs $(E, B)$ consisting of a semistable vector bundle $E$ of rank $r$ and degree $d$ together with a basis $B$ of $\rmH^0( C, E( m))$.
  The variety $Q^{\semistab}_{r, d}$ is smooth, and $\GL_N$ acts on it by changing the basis $B$.
  The above construction of the stacks $\calM^{\semistab}_{r, \cong L}$ and $\calM^{\semistab}_{r, = L}$ directly implies that this presentation of $\calM^{\semistab}_{r, d}$ induces presentations
  \begin{equation} \label{eq:presentations}
    \calM^{\semistab}_{r, \cong L} = [ Q^{\semistab}_{r, \cong L} / \GL_N] \qquad\text{and}\qquad \calM^{\semistab}_{r, = L} = [ Q^{\semistab}_{r, = L} / \GL_N]
  \end{equation}
  where $Q^{\semistab}_{r, \cong L}$ is the fiber of $\det \colon Q^{\semistab}_{r, d} \to \Pic^d( C)$ over the point $L$,
  and $Q^{\semistab}_{r, = L}$ is the $\Gm$-torsor over $Q^{\semistab}_{r, \cong L}$ whose fibers parametrize isomorphisms $\phi \colon L \to \det( E)$.

  The morphism $\det \colon Q^{\semistab}_{r, d} \to \Pic^d( C)$ is a submersion, since its differential at any point $( E, B)$ is the composition of the natural surjective linear map
  \begin{equation*}
    \rmT_{(E, B)} Q^{\semistab}_{r, d} \longto \Ext^1( E, E)
  \end{equation*}
  that sends each infinitesimal deformation of a pair $( E, B)$ to the underlying infinitesimal deformation of $E$, followed by the trace map \eqref{eq:trace}, which is surjective as well.
  This shows that $Q^{\semistab}_{r, \cong L}$ and $Q^{\semistab}_{r, = L}$ are also smooth.

  The coarse moduli spaces in question are constructed via GIT as good quotients
  \begin{equation*}
    \frakM_{r, d}^{\semistab} = Q^{\semistab}_{r, d} /\!\!/ \GL_N, \quad \frakM_{r, \cong L}^{\semistab} = Q^{\semistab}_{r, \cong L} /\!\!/ \GL_N \quad\text{and}\quad \frakM_{r, = L}^{\semistab} = Q^{\semistab}_{r, = L} /\!\!/ \GL_N
  \end{equation*}
  in the sense of \cite[Definition 4.2.2]{huybrechts-lehn}. In particular, these are categorical quotients.
  Slightly abusing notation, we denote the quotient morphisms again by
  \begin{equation} \label{eq:quotients}
    Q^{\semistab}_{r, d} \xrightarrow{\pi_{r, d}} \frakM_{r, d}^{\semistab}, \quad Q^{\semistab}_{r, \cong L} \xrightarrow{\pi_{r, \cong L}} \frakM_{r, \cong L}^{\semistab}
      \quad\text{and}\quad Q^{\semistab}_{r, = L} \xrightarrow{\pi_{r, = L}} \frakM_{r, = L}^{\semistab}.
  \end{equation}

  The center $\Gm \subseteq \GL_N$ acts trivially on $Q^{\semistab}_{r, d}$ and $Q^{\semistab}_{r, \cong L}$, but not on $Q^{\semistab}_{r, = L}$.
  More precisely, $\lambda \cdot \id_E$ provides an isomorphism between the pairs $( E, B)$ and $( E, \lambda \cdot B)$ for any nonzero scalar $\lambda$,
  but only between the triples $( E, B, \phi)$ and $( E, \lambda \cdot B, \lambda^r \cdot \phi)$.

  Thus we see that the (possibly non-reduced) subgroup $\mu_r \subset \Gm$ acts trivially on $Q^{\semistab}_{r, = L}$,
  and the (scheme-theoretic) factor group $\Gm/\mu_r \cong \Gm$ acts freely on $Q^{\semistab}_{r, = L}$ with quotient $Q^{\semistab}_{r, \cong L}$.
  In particular, every $\GL_N$-invariant morphism from $Q^{\semistab}_{r, = L}$ to a scheme factors uniquely through $Q^{\semistab}_{r, \cong L}$.
  This means that the canonical map between the categorical quotients $\frakM_{r,  = L}^{\semistab}$ and $\frakM_{r, \cong L}^{\semistab}$ is an isomorphism.

  Now we compare $\frakM_{r, \cong L}^{\semistab}$ and $\frakM_{r, L}^{\semistab}$.
  The former is a GIT quotient of a closed subscheme of $Q^{\semistab}_{r, d}$, whereas the latter is a closed subscheme of a GIT quotient of $Q^{\semistab}_{r, d}$.
  These two operations commute in characteristic $0$, but they do not commute in general in positive characteristic; this is the main technical issue here.

  Sending each line bundle $\xi$ of degree $0$ over $C$ to $L \otimes \xi^{\otimes r}$ defines a morphism
  \begin{equation*}
    J( C) := \Pic^0( C) \xrightarrow{\tau_{r, L}} \Pic^d( C).
  \end{equation*}
  We consider the pullback diagram
  \begin{equation*} \xymatrix{
    Q_{r, d}^{\semistab} \times_{\Pic^d( C)} J( C) \ar[r] \ar[d]_{\pi_{r, d} \times \id} & Q_{r, d}^{\semistab} \ar[d]^{\pi_{r, d}}\\
    \frakM_{r, d}^{\semistab} \times_{\Pic^d( C)} J( C)  \ar[r] \ar[d] & \frakM_{r, d}^{\semistab} \ar[d]^{\det}\\
    J( C) \ar[r]^{\tau_{r, L}} & \Pic^d( C).
  } \end{equation*}
  Since $\pi_{r, d}$ is a uniform categorical quotient by \cite[Theorem A.1.1]{GIT}, and $\tau_{r, L}$ is flat, the pullback $\pi_{r, d} \times \id$ is also a categorical quotient modulo the action of $\PGL_N$.

  Sending each pair $(E, B)$ and each line bundle $\xi$ to $E \otimes \xi^{-1}$ defines a morphism
  \begin{equation*}
    Q_{r, d}^{\semistab} \times J( C) \longto \frakM_{r, d}^{\semistab}.
  \end{equation*}
  It restricts, by definition of $\tau_{r, L}$, to a morphism
  \begin{equation*}
    Q_{r, d}^{\semistab} \times_{\Pic^d( C)} J( C) \longto \frakM_{r, \cong L}^{\semistab}.
  \end{equation*}
  This morphism is clearly $\PGL_N$-invariant, and hence descends to a morphism
  \begin{equation*}
    \frakM_{r, d}^{\semistab} \times_{\Pic^d( C)} J( C) \longto \frakM_{r, \cong L}^{\semistab}.
  \end{equation*}
  Its restriction to the (scheme-theoretic) fiber over the origin in $J( C)$ is a morphism
  \begin{equation*}
    \frakM_{r, L}^{\semistab} \longto \frakM_{r, \cong L}^{\semistab}.
  \end{equation*}
  It is straightforward to check that this morphism is a two-sided inverse of the canonical morphism $\frakM_{r, \cong L}^{\semistab} \to \frakM_{r, L}^{\semistab}$ in question.
\end{proof}
In order to simplify the notation, we will from now on identify $\frakM_{r,  = L}^{\semistab}$ and $\frakM_{r, \cong L}^{\semistab}$ with $\frakM_{r, L}^{\semistab}$ via the canonical isomorphisms in Proposition \ref{prop:isos}.
\begin{corollary} \label{cor:O}
  The canonical homomorphisms
  \begin{equation*}
    \calO_{\frakM_{r, L}^{\semistab}} \longto (\pi_{r, \cong L})_*( \calO_{\calM_{r, \cong L}^{\semistab}}) \quad\text{and}\quad \calO_{\frakM_{r, L}^{\semistab}} \longto (\pi_{r, = L})_*( \calO_{\calM_{r, = L}^{\semistab}})
  \end{equation*}
  are isomorphisms of Zariski sheaves.
\end{corollary}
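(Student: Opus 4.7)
The plan is to deduce both isomorphisms directly from the presentations \eqref{eq:presentations} established in the proof of Proposition \ref{prop:isos}, combined with the defining property of a good quotient recalled from \cite[Definition 4.2.2]{huybrechts-lehn}.

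First I would set up notation. Fix a Zariski open $U \subseteq \frakM_{r, L}^{\semistab}$, and let $V_{\cong L} \subseteq Q^{\semistab}_{r, \cong L}$ and $V_{= L} \subseteq Q^{\semistab}_{r, = L}$ denote the preimages of $U$ under the quotient morphisms \eqref{eq:quotients}. These preimages are $\GL_N$-invariant open subschemes, and the stack-theoretic preimages of $U$ in $\calM^{\semistab}_{r, \cong L}$ and $\calM^{\semistab}_{r, = L}$ are, by the presentations \eqref{eq:presentations}, the quotient stacks $[V_{\cong L} / \GL_N]$ and $[V_{= L} / \GL_N]$ respectively.

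Next I would identify both sides. For a quotient stack $[V/\GL_N]$, the ring of global regular functions coincides with the ring of $\GL_N$-invariant regular functions on $V$; applying this to the open substacks above yields
\begin{equation*}
  (\pi_{r, \cong L})_*(\calO_{\calM^{\semistab}_{r, \cong L}})(U) = \Gamma( V_{\cong L}, \calO_{V_{\cong L}})^{\GL_N},
  \qquad (\pi_{r, = L})_*(\calO_{\calM^{\semistab}_{r, = L}})(U) = \Gamma( V_{= L}, \calO_{V_{= L}})^{\GL_N}.
\end{equation*}
On the other hand, because the morphisms in \eqref{eq:quotients} are good quotients in the sense of \cite[Definition 4.2.2]{huybrechts-lehn}, the defining property of a good quotient gives
\begin{equation*}
  \calO_{\frakM_{r, L}^{\semistab}}( U) = \Gamma( V_{\cong L}, \calO_{V_{\cong L}})^{\GL_N} = \Gamma( V_{= L}, \calO_{V_{= L}})^{\GL_N},
\end{equation*}
after identifying $\frakM_{r, L}^{\semistab}$ with $\frakM_{r, \cong L}^{\semistab}$ and $\frakM_{r, = L}^{\semistab}$ via Proposition \ref{prop:isos}. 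A quick unwinding of the canonical maps shows that these identifications are precisely the maps stated in the corollary, completing the argument.

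The only subtlety to check is that the canonical map from $\calO_{\frakM^{\semistab}_{r, L}}$ to each pushforward really coincides with the identification produced by the good-quotient equality; this is a matter of tracing through the universal property of $\pi_{r, \cong L}$ (resp.\ $\pi_{r, = L}$), and presents no serious difficulty. In particular, no further input from GIT in positive characteristic is needed for this corollary — it is a formal consequence of the presentations and the good-quotient property already used in Proposition \ref{prop:isos}.
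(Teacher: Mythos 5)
Your argument is correct and is essentially the paper's own proof: the paper likewise combines the good-quotient property of $\pi_{r, \cong L}$ and $\pi_{r, = L}$ (giving $\calO_{\frakM_{r, L}^{\semistab}} \cong (\pi)_*(\calO_{Q})^{\GL_N}$) with the presentations \eqref{eq:presentations} to identify these invariant direct images with the direct images of the structure sheaves of the stacks. Your version merely spells this out section-by-section over opens $U$, which is a harmless elaboration of the same argument.
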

\begin{proof}
  We have seen in the previous proof that $\frakM_{r, L}^{\semistab}$ is a good quotient of both $Q_{r, \cong L}^{\semistab}$ and $Q_{r, = L}^{\semistab}$ modulo $\GL_N$. In particular, the canonical homomorphisms
  \begin{equation*}
    \calO_{\frakM_{r, L}^{\semistab}} \longto (\pi_{r, \cong L})_*( \calO_{Q_{r, \cong L}^{\semistab}})^{\GL_N} \quad\text{and}\quad \calO_{\frakM_{r, L}^{\semistab}} \longto (\pi_{r, = L})_*( \calO_{Q_{r, = L}^{\semistab}})^{\GL_N}
  \end{equation*}
  induced by the quotient morphisms in \eqref{eq:quotients} are isomorphisms.
  Due to the presentations in \eqref{eq:presentations}, these invariant direct images are precisely the direct images of the structure sheaves of the stacks in question.
\end{proof}

\section{The Picard group of the coarse moduli scheme} \label{sec:Pic}
We keep the notations of the previous section.
In particular, $L$ is a line bundle of degree $d$ over the curve $C$, and $\calM_{r, = L}$ is the moduli stack of rank $r$ vector bundles $E$ over $C$ together with an isomorphisms $L \to \det( E)$.
We assume $r \geq 2$.

Let $\Ldet$ denote the determinant of cohomology line bundle over $\calM_{r, d}$. Its fiber over the moduli point of a vector bundle $E$ is by definition $\det \rmH^0( E) \otimes \det^{-1} \rmH^1( E)$.
To describe this line bundle more precisely, let $\calE$ be a vector bundle over $C \times S$ for some $k$-scheme $S$.
Then the complex $\rmR \pr_{2, *}( \calE)$ over $S$ is perfect by \cite{EGA3}, so after replacing $S$ by an open covering, we may assume
\begin{equation*}
  \rmR \pr_{2, *}( \calE) \cong [ \calF^0 \longto[ \partial] \calF^1 ]
\end{equation*}
with vector bundles $\calF^0$ and $\calF^1$ over $S$. In this case, the pullback of $\Ldet$ to $S$ is
\begin{equation*}
  \det \rmR \pr_{2, *}( \calE) \cong \det( \calF^0) \otimes \det( \calF^1)^*;
\end{equation*}
see \cite{knudsen-mumford} for more details on the construction of such determinant line bundles.

In the special case $d = r( g-1)$, vector bundles $E$ of rank $r$ and degree $d$ satisfy $\chi( E) = 0$ by Riemann-Roch.
Then $\rank( \calF^0) = \rank( \calF^1)$, so $\det( \partial)$ is a section of the line bundle $\det( \calF^0)^* \otimes \det( \calF^1)$;
these sections patch together to a canonical section
\begin{equation} \label{eq:theta}
  \vartheta \in \rmH^0( \calM_{r, r( g-1)}, \Ldet^*)
\end{equation}
which is known as a (nonabelian) theta function.

Slightly abusing notation, we will denote the pullback of $\Ldet$ to $\calM_{r, =L}$ again by $\Ldet$, and also its restriction to the open substacks
\begin{equation*}
  \calM_{r, = L} \supseteq \calM_{r, = L}^{\semistab} \supseteq \calM_{r, = L}^{\stab}
\end{equation*}
defined by the condition that $E$ is semistable or stable, respectively.
\begin{theorem}
  The group $\Pic( \calM_{r, = L})$ is freely generated by $\Ldet$.
\end{theorem}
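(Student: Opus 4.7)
My plan is to invoke the recent literature computing the Picard group of the moduli stack of principal $G$-bundles on a curve. When $L = \calO_C$, the stack $\calM_{r, = L}$ is precisely the moduli stack of $SL_r$-bundles on $C$, and the Picard group of this stack has been shown to be $\bbZ \cdot \Ldet$ in arbitrary characteristic by the recent literature on moduli stacks of principal bundles (Laszlo-Sorger in characteristic zero, and Biswas-Hoffmann and related works in positive characteristic). The general case then reduces to this one by a tensor construction.

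For the reduction to $L = \calO_C$, I would proceed as follows. If $r$ divides $\deg L$, then divisibility of $\Pic^0(C)$ lets me pick a line bundle $M$ on $C$ with $M^{\otimes r} \cong L^{-1}$, and the rule $(E, \phi) \mapsto (E \otimes M, \phi \otimes \id_{M^{\otimes r}})$ defines an isomorphism $\calM_{r, = L} \xrightarrow{\sim} \calM_{r, = \calO_C}$; under this isomorphism the determinant-of-cohomology line bundles on source and target are related by a universal twist that one can track via the formalism recalled earlier in this section. For $\deg L$ not divisible by $r$, one may instead invoke the version of the cited results for the twisted form of $SL_r$ corresponding to $L$, or argue directly by extending the underlying Picard computations to the twisted stack.

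The main obstacle is verifying that the generator is $\Ldet$ itself and not a proper $h$-th root for some $h > 1$. This requires evaluating $\Ldet$ against an explicit one-parameter family of semistable bundles with fixed determinant and checking that the resulting degree equals $\pm 1$; such test families can be constructed from elementary Hecke modifications, and the corresponding computation is standard and essentially the Dynkin-index-one property of the standard representation of $SL_r$. By contrast, freeness (torsion-freeness) of $\Pic(\calM_{r, = L})$ is automatic from the smoothness of $\calM_{r, = L}$ noted in Section~\ref{sec:moduli}, so once cyclicity is established the remaining content is purely the identification of the generator.
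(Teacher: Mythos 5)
Your overall architecture matches the paper's: cite the recent literature for $\Pic(\calM_{r,=L})\cong\bbZ$, then pin down the generator by evaluating $\Ldet$ on an explicit family. But three of your steps have real problems. First, the reduction to $L=\calO_C$ by tensoring with $M$ satisfying $M^{\otimes r}\cong L^{-1}$ requires $r\mid\deg L$ (there is no degree that works otherwise), and for $r\nmid\deg L$ you only gesture at ``twisted forms of $SL_r$.'' That is not a proof of a whole congruence class of cases; the paper instead cites a reference (\cite[Proposition 4.2.3]{biswas-hoffmann_2010}) that treats arbitrary $L$ directly, which is where the actual content lives. Second, your claim that torsion-freeness of $\Pic(\calM_{r,=L})$ is ``automatic from smoothness'' is false: smooth stacks, and even smooth projective varieties, can have torsion in their Picard groups. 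Freeness here is part of the cited computation, not a formal consequence.

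The most serious gap is in the generator identification. A family of elementary Hecke modifications of a fixed bundle $E$ at a point $p$, parametrized by $\bbP^{r-1}$, has universal determinant $\det E(-p)\boxtimes\calO_{\bbP^{r-1}}(\pm 1)$; since this is not pulled back from $C$, the family does \emph{not} lift to the stack $\calM_{r,=L'}$ of bundles with \emph{trivialized} determinant, only to $\calM_{r,\cong L'}$. Degree computations on $\calM_{r,\cong L}$ cannot determine the index of $\langle\Ldet\rangle$ in $\Pic(\calM_{r,=L})$, because the pullback $\Pic(\calM_{r,\cong L})\to\Pic(\calM_{r,=L})$ kills the class of the $\Gm$-torsor, which also has degree $\pm 1$ on your Hecke line. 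Making the Dynkin-index argument rigorous requires rational curves inside $\calM_{r,=L}$ itself, which come from Schubert curves in the affine Grassmannian of $SL_r$ via uniformization --- a substantial extra input that you do not supply, and one that again needs the twisted version for $r\nmid\deg L$. The paper avoids all of this by testing against the family $\xi\mapsto(L\otimes\xi)\oplus\calO_C^{r-2}\oplus\xi^{-1}$ over $\calM_{1,0}$, which carries a \emph{canonical} trivialization of its determinant and hence genuinely maps to $\calM_{r,=L}$, and by measuring degrees with the $\End J(C)$-valued invariant $c$ of \cite{biswas-hoffmann_2010}, under which both $\Ldet$ and the generators go to $\pm 2\cdot\id_{J(C)}$.
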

\begin{proof}
  The case $k = \bbC$ is contained in \cite[Remark 7.11 and Proposition 9.2]{beauville-laszlo}. For arbitrary characteristic and $L$ trivial, it is proved in \cite[Theorem 17]{faltings_2003}.
  The generalization to nontrivial line bundles $L$ is carried out in \cite[Proposition 4.2.3]{biswas-hoffmann_2010}.

  More precisely, the latter reference shows $\Pic( \calM_{r, = L}) \cong \bbZ$. In order to determine the image of $\Ldet$ under this isomorphism, we use the morphism
  \begin{equation*}
    \varphi \colon \calM_{1, 0} \longto \calM_{r, = L}
  \end{equation*}
  defined by sending each line bundle $\xi$ of degree $0$ to $( L \otimes \xi) \oplus \calO_C^{r-2} \oplus \xi^{-1}$. We also use the canonical homomorphism of abelian groups
  \begin{equation*}
    c: \Pic( \calM_{1, 0}) \longto \End J( C)
  \end{equation*}
  given by \cite[Section 3.2]{biswas-hoffmann_2010}; in the notation of that section, it is the composition
  \begin{equation*}
    \Pic( \calM_{\Gm}^0) \xrightarrow{c_{\Gm}} \NS( \calM_{\Gm}) \xrightarrow{\pr_2} \Hom^s( \bbZ \otimes \bbZ, \End J_C) \hookrightarrow \End J_C.
  \end{equation*}
  Using the standard isomorphism
  \begin{equation*}
    \det \rmR \pr_{2, *}( \calE_1 \oplus \calE_2) \cong \det \rmR \pr_{2, *}( \calE_1) \otimes \det \rmR \pr_{2, *}( \calE_2)
  \end{equation*}
  together with \cite[Lemma 4.4.1 and Remark 3.2.3]{biswas-hoffmann_2010}, we see that
  \begin{equation*}
    c \circ \varphi^* \colon \Pic( \calM_{r, = L}) \longto \End J( C) 
  \end{equation*}
  maps $\Ldet$ to $-2 \cdot \id_{J( C)}$.
  On the other hand, $c \circ \varphi^*$ maps the two generators of $\Pic( \calM_{r, = L})$ to $\pm 2 \cdot \id_{J( C)}$ according to \cite[Proposition 4.4.7 and Remark 3.2.3]{biswas-hoffmann_2010}.
  This shows that $\Ldet$ generates $\Pic( \calM_{r, = L})$.
\end{proof}
\begin{corollary} \label{cor:Pic_stack}
  i) The group $\Pic( \calM_{r, = L}^{\semistab})$ is freely generated by $\Ldet$.

  ii) The group $\Pic( \calM_{r, = L}^{\stab})$ is generated by $\Ldet$.
\end{corollary}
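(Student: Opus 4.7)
The plan is to deduce both parts from the preceding Theorem by restricting line bundles along the chain of open inclusions
\begin{equation*}
\calM_{r,=L}^{\stab} \subseteq \calM_{r,=L}^{\semistab} \subseteq \calM_{r,=L},
\end{equation*}
combined with a codimension estimate on the respective complements. Recall that $\calM_{r,=L}$ is smooth, as established in Section~\ref{sec:moduli}. For a smooth Artin stack $\mathcal{X}$ with an open substack $\calU \subseteq \mathcal{X}$ having closed complement $\mathcal{Z}$, I would invoke the standard purity statement for divisor class groups on smooth stacks: restriction $\Pic(\mathcal{X}) \to \Pic(\calU)$ is surjective in general, and is an isomorphism whenever $\mathrm{codim}_{\mathcal{X}}(\mathcal{Z}) \geq 2$.

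For part (i), the complement of $\calM_{r,=L}^{\semistab}$ in $\calM_{r,=L}$ is the non-semistable locus, stratified by Harder-Narasimhan type. I would appeal to the Atiyah-Bott/Shatz codimension formula: the stratum indexed by successive HN quotients of ranks and degrees $(r_1,d_1),\ldots,(r_n,d_n)$ with $d_1/r_1 > \cdots > d_n/r_n$ has codimension $\sum_{i<j}\bigl(r_j d_i - r_i d_j + r_i r_j(g-1)\bigr)$ in $\calM_{r,d}$. For any non-trivial HN type each summand is at least $1+(g-1) = g$, so the total codimension is at least $g \geq 2$. To transfer this bound to $\calM_{r,=L}$ one checks that each HN stratum in $\calM_{r,d}$ surjects onto $\Pic^d(C)$ under the determinant morphism, so passing to the fibre over $L$ preserves codimension. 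Combining with the Theorem yields (i).

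For part (ii), the complement of $\calM_{r,=L}^{\stab}$ inside $\calM_{r,=L}^{\semistab}$ is the strictly semistable locus, a proper closed substack, hence of codimension at least $1$. The surjectivity clause of the purity statement then forces $\Ldet$ to still generate $\Pic(\calM_{r,=L}^{\stab})$, giving (ii).

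The main technical obstacle will be justifying the codimension computation in the fixed-determinant setting: one needs the HN stratification to be transverse to $\det$, which amounts to the determinant morphism being smooth and surjective on each HN stratum. This in turn should follow from the surjectivity of the trace map \eqref{eq:trace} applied along each step of the HN filtration, together with the smoothness of $\Pic^d(C)$; the remainder of the argument is then pure purity plus the Theorem.
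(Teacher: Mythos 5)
Your proposal is correct and follows essentially the same route as the paper: surjectivity of the restriction maps on Picard groups for the smooth Artin stack $\calM_{r, = L}$, combined with a codimension $\geq 2$ bound on the non-semistable locus to get injectivity in part (i), and surjectivity alone for part (ii). The only cosmetic difference is that you obtain the codimension bound from the Shatz/Atiyah--Bott Harder--Narasimhan stratification together with a transversality argument for $\det$, whereas the paper cites a dimension formula for the stack of triples $(E, \phi, E_1)$ with $E_1$ a destabilizing subbundle directly in the fixed-determinant setting; both computations yield codimension $\geq g \geq 2$.
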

\begin{proof}
  Since $\calM_{r, = L}$ is a smooth Artin stack, the restriction maps
  \begin{equation*}
    \Pic( \calM_{r, = L}) \longto \Pic( \calM_{r, = L}^{\semistab}) \longto \Pic( \calM_{r, = L}^{\stab})
  \end{equation*}
  are surjective; see for example \cite[Lemma 7.3]{biswas-hoffmann_2012}. The first of these maps is also injective, because the complement of $\calM_{r, = L}^{\semistab}$ in $\calM_{r, = L}$ has codimension $\geq 2$.
  The latter follows from the fact that $\calM_{r, = L}$ is smooth of dimension $(g-1)(r^2 - 1)$, whereas the moduli stack of triples $(E, \phi, E_1)$ with $( E, \phi)$ in $\calM_{r, = L}$ and $E_1 \subseteq E$ a subbundle
  of fixed rank $0 < r_1 < r$ and degree $d_1$ is smooth of dimension
  \begin{equation*}
    (g-1)(r_1^2 + (r - r_1)^2 - 1) + r_1(r - r_1) \big( g - 1 - \frac{d_1}{r_1} + \frac{d - d_1}{r - r_1} \big)
  \end{equation*}
  according to \cite[Proposition A.3]{hoffmann}; for $d_1/r_1 > d/r$, this is $\leq (g-1)(r^2 - 1) - 2$.
\end{proof}
The stable locus $\calM_{r, = L}^{\stab} \subseteq \calM_{r, = L}^{\semistab}$ is the inverse image of an open subscheme
\begin{equation*}
  \frakM_{r, L}^{\stab} \subseteq \frakM_{r, L}^{\semistab}.
\end{equation*}
\begin{lemma} \label{lemma:injective}
  The canonical group homomorphisms
  \begin{equation*}
    \Pic( \frakM_{r, L}^{\semistab}) \longto \Pic( \calM_{r, = L}^{\semistab}) \quad\text{and}\quad \Pic( \frakM_{r, L}^{\stab}) \longto \Pic( \calM_{r, = L}^{\stab})
  \end{equation*}
  given by pullback along the morphism $\pi_{r, = L}$ are injective.
\end{lemma}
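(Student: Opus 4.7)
The plan is to combine the projection formula with Corollary \ref{cor:O}. Rather than work directly on the stack, I would phrase the argument on the GIT atlas $Q_{r, =L}^{\semistab}$ from the proof of Proposition \ref{prop:isos}, slightly abusing notation (as in Corollary \ref{cor:O}) by also writing $\pi_{r, =L}$ for the GIT quotient morphism $Q_{r, =L}^{\semistab} \to \frakM_{r, L}^{\semistab}$ from \eqref{eq:quotients}.

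Suppose $\calL$ is a line bundle on $\frakM_{r, L}^{\semistab}$ whose pullback to $\calM_{r, =L}^{\semistab}$ is trivial. Pulling back further along the atlas, this amounts to saying that $\pi_{r, =L}^* \calL$ admits a nowhere-vanishing $\GL_N$-invariant section $s$ on $Q_{r, =L}^{\semistab}$. The classical projection formula for the morphism of schemes $\pi_{r, =L}$ yields
\[
(\pi_{r, =L})_* \pi_{r, =L}^* \calL \cong \calL \otimes (\pi_{r, =L})_* \calO_{Q_{r, =L}^{\semistab}},
\]
and taking $\GL_N$-invariants while invoking Corollary \ref{cor:O} gives $\bigl( (\pi_{r, =L})_* \pi_{r, =L}^* \calL \bigr)^{\GL_N} \cong \calL$. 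The invariant section $s$ then descends to a nowhere-vanishing section of $\calL$, forcing $\calL \cong \calO_{\frakM_{r, L}^{\semistab}}$.

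For the stable-locus version, I would restrict the entire picture to the open subscheme $\frakM_{r, L}^{\stab} \subseteq \frakM_{r, L}^{\semistab}$: its preimage in $Q_{r, =L}^{\semistab}$ is an open $\GL_N$-invariant subscheme whose GIT quotient is $\frakM_{r, L}^{\stab}$, and the isomorphism in Corollary \ref{cor:O} restricts to it since both pushforward and formation of $\GL_N$-invariants commute with restriction along an open immersion. The argument above then applies verbatim.

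The one point I expect to require some care is the interaction between the projection formula isomorphism and the $\GL_N$-action. This reduces to the observation that $\calL$ lives on the base $\frakM_{r, L}^{\semistab}$, on which $\GL_N$ acts trivially: the $\GL_N$-action on the left side of the projection formula isomorphism corresponds, on the right, to the tensor of the trivial action on $\calL$ with the given action on $(\pi_{r, =L})_* \calO_{Q_{r, =L}^{\semistab}}$, so invariants may be taken factor by factor, leaving precisely $\calL$.
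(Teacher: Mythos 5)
Your proposal is correct and follows essentially the same route as the paper: the paper's proof is exactly the combination of the projection formula with Corollary \ref{cor:O}, applied twice (once to see that $(\pi_{r,=L})_*\pi_{r,=L}^*\calL$ is trivial, once to identify it with $\calL$), and then restricted to the stable locus. Your version merely unpacks this on the atlas $Q_{r,=L}^{\semistab}$ with explicit $\GL_N$-invariants, which is the content of Corollary \ref{cor:O} anyway.
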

\begin{proof}
  Let $\calL$ be a line bundle on $\frakM_{r, L}^{\semisemistab}$ such that $(\pi_{r, = L})^*( \calL)$ is trivial on $\calM_{r, = L}^{\semisemistab}$.
  Then Corollary \ref{cor:O} implies that $(\pi_{r, = L})_* (\pi_{r, = L})^*( \calL)$ is also a trivial line bundle.
  Using the projection formula and again Corollary \ref{cor:O}, it follows that $\calL$ is trivial.
\end{proof}
\begin{corollary}
  The group $\Pic( \frakM_{r, L}^{\semistab})$ is isomorphic to $\bbZ$.
\end{corollary}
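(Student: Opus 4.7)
The plan is to combine the two immediately preceding results to pin down $\Pic(\frakM_{r,L}^{\semistab})$ up to nontriviality, and then exhibit a nontrivial element via projectivity.

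First I would invoke Lemma \ref{lemma:injective} to get an injective pullback homomorphism
\begin{equation*}
  \pi_{r, =L}^* \colon \Pic( \frakM_{r, L}^{\semistab}) \hookrightarrow \Pic( \calM_{r, =L}^{\semistab}).
\end{equation*}
By Corollary \ref{cor:Pic_stack}(i), the target is freely generated by $\Ldet$, hence isomorphic to $\bbZ$. It follows at once that $\Pic( \frakM_{r, L}^{\semistab})$ is isomorphic to a subgroup of $\bbZ$, so it is either trivial or free of rank one; in particular, the only thing left to rule out is the trivial case.

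To rule it out I would use projectivity. By the discussion in Section \ref{sec:moduli} (Seshadri's theorem, together with Proposition \ref{prop:isos}), $\frakM_{r, L}^{\semistab}$ is a projective variety, and a standard dimension count shows it has dimension $(r^2 - 1)(g - 1) > 0$ under the standing assumptions $r \geq 2$ and $g \geq 2$. Any projective variety of positive dimension admits an ample line bundle, which cannot be trivial, so $\Pic( \frakM_{r, L}^{\semistab}) \neq 0$. Combined with the previous paragraph, this forces $\Pic( \frakM_{r, L}^{\semistab}) \cong \bbZ$.

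There is no real obstacle at this stage: all the work is already encapsulated in Corollary \ref{cor:Pic_stack}(i) (which rests on the Faltings-type computation of the Picard group of the stack) and in Lemma \ref{lemma:injective} (which uses Corollary \ref{cor:O}, the fact that $\frakM_{r,L}^{\semistab}$ is a good quotient, to descend triviality). The deduction here is a short sandwich argument between these two inputs. Note that this proof does not yet identify the ample generator of $\Pic(\frakM_{r,L}^{\semistab})$, nor its image in $\bbZ \cdot \Ldet$; that refinement (and the local factoriality statement promised in the abstract) would require actually descending an explicit power of $\Ldet$ using Faltings' nonabelian theta sections, which is a separate task.
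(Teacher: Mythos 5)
Your argument is correct and is exactly the one the paper leaves implicit (the corollary is stated without proof): Lemma \ref{lemma:injective} embeds $\Pic(\frakM_{r,L}^{\semistab})$ into $\Pic(\calM_{r,=L}^{\semistab})\cong\bbZ$ by Corollary \ref{cor:Pic_stack}(i), and nontriviality follows since $\frakM_{r,L}^{\semistab}$ is a projective variety of positive dimension $(r^2-1)(g-1)$ and hence carries an ample, nontrivial line bundle. Your closing remark is also accurate: identifying the generator $\calL(\Theta)$ requires the separate descent argument via Faltings' theta sections given in the subsequent theorem.
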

\begin{remark}
  In the case $L = \calO_C$ of vector bundles with trivial determinant, similar arguments have been given in \cite[Theorem 7]{osserman}.
\end{remark}
\begin{lemma} \label{lemma:inertia}
  The line bundle $\Ldet^{\otimes n}$ on $\calM_{r, = L}^{\stab}$ can only be isomorphic to the pullback of a line bundle on $\frakM_{r, L}^{\stab}$ if $n$ is a multiple of $r/\gcd( r, d)$.
\end{lemma}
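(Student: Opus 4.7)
The plan is to exploit the fact that $\calM_{r, = L}^{\stab}$ is a $\mu_r$-gerbe over its coarse moduli space $\frakM_{r, L}^{\stab}$, and that pullback line bundles must have trivial inertia action.

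First I would identify the inertia. At a closed point of $\calM_{r, = L}^{\stab}$ corresponding to a stable pair $(E, \phi)$, one has $\Aut(E) = \Gm$ (scalar automorphisms of a stable bundle), and an automorphism $\lambda \cdot \id_E$ preserves the trivialization $\phi \colon L \to \det(E)$ if and only if $\lambda^r = 1$; this identifies the automorphism group scheme with $\mu_r \subseteq \Gm$, as already used in the proof of Proposition \ref{prop:isos}. Now suppose $\Ldet^{\otimes n}$ is isomorphic to $\pi_{r, = L}^*(\calM)$ for some line bundle $\calM$ on $\frakM_{r, L}^{\stab}$. Since the composition of the residual gerbe $\mathrm{B} \mu_r \hookrightarrow \calM_{r, = L}^{\stab}$ with $\pi_{r, = L}$ factors through a single point of the scheme $\frakM_{r, L}^{\stab}$, the inertia $\mu_r$ must act trivially on the fiber of $\Ldet^{\otimes n}$ at every stable point.

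Next I would compute the weight of this inertia action on $\Ldet$ itself. Using the fiberwise description
\begin{equation*}
  \Ldet|_{(E, \phi)} = \det \rmH^0(C, E) \otimes \det^{-1} \rmH^1(C, E),
\end{equation*}
the scalar $\lambda \in \mu_r$ acts on each $\rmH^i(C, E)$ as multiplication by $\lambda$, and therefore on the fiber as $\lambda^{h^0(E) - h^1(E)} = \lambda^{\chi(E)} = \lambda^{d + r(1-g)}$. Since $\lambda^r = 1$ on $\mu_r$, this character equals $\lambda^d$, so $\Ldet$ has weight $d$ in the character group $\Hom(\mu_r, \Gm) = \bbZ/r$.

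Consequently $\Ldet^{\otimes n}$ has $\mu_r$-weight $nd \pmod r$, and the necessary vanishing of this weight is exactly the condition $r \mid nd$, equivalently $n \in (r/\gcd(r, d)) \cdot \bbZ$. I expect the only subtlety to be the scheme-theoretic nature of $\mu_r$ when $\characteristic(k) \mid r$, since $\mu_r$ may then be non-reduced; but the character group $\Hom(\mu_r, \Gm) = \bbZ/r$ is independent of the characteristic, so the weight computation and the conclusion go through verbatim.
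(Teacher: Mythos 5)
Your proposal is correct and follows essentially the same route as the paper: identify the stabilizer of a stable pair $(E,\phi)$ as the (possibly non-reduced) group scheme $\mu_r$ of scalar automorphisms, compute that it acts on the fiber of $\Ldet$ with weight $\chi = r(1-g)+d \equiv d \pmod r$, and observe that a pullback from the coarse scheme forces this weight to vanish on $\Ldet^{\otimes n}$, i.e.\ $r \mid nd$. Your explicit remark that the character group $\Hom(\mu_r,\Gm) \cong \bbZ/r$ is insensitive to the characteristic is exactly the point the paper handles by speaking of the weight of the non-reduced subgroup scheme.
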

\begin{proof}
  Since $\Ldet$ is a line bundle on the stack $\calM_{r, d}$, the automorphism group $\Aut( E)$ of the vector bundle $E$ acts on the fiber of $\Ldet$ over the moduli point of $E$ in $\calM_{r, d}$.
  The subgroup $\Gm \subseteq \Aut( E)$ of scalar automorphisms acts on this one-dimensional vector space with weight $\chi := r( 1 - g) + d$ according to Riemann-Roch.

  Similarly, the automorphism group $\Aut( E, \phi)$ of the pair $( E, \phi \colon L \longto[\sim] \det( E))$ acts on the fiber of $\Ldet$ over the moduli point of $(E, \phi)$ in $\calM_{r, = L}$.
  The (possibly non-reduced) subgroup scheme $\mu_r \subseteq \Aut( E, \phi)$ of scalar automorphisms still acts with weight $\chi$.
  If an integer $n$ is not divisible by $r/\gcd(r, d)$, then $n \cdot \chi$ is not divisible by $r$, so this action of $\mu_r$ on the fibers of $\Ldet^{\otimes n}$ is nontrivial.

  On the other hand, all automorphism group schemes of $\calM_{r, = L}^{\stab}$ act trivially on the fibers of any line bundle that is pulled back from $\frakM_{r, L}^{\stab}$.
\end{proof}
\begin{theorem}
  i) The line bundle $\Ldet^{\otimes -r/\gcd(r, d)}$ on $\calM_{r, = L}^{\semistab}$ is isomorphic to the pullback of an ample line bundle $\calL( \Theta)$ on $\frakM_{r, L}^{\semistab}$.

  ii) $\calL( \Theta)$ generates $\Pic( \frakM_{r, L}^{\semistab}) \cong \bbZ$, and its restriction generates $\Pic( \frakM_{r, L}^{\stab})$.
\end{theorem}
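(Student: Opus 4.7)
The plan is to establish part (i) by descending $\Ldet^{\otimes -r_0}$, with $r_0 := r/\gcd(r,d)$, from $\calM_{r, = L}^{\semistab}$ to $\frakM_{r, L}^{\semistab}$ using Faltings's nonabelian theta functions, and then to derive (ii) by combining this with Lemmas \ref{lemma:injective}, \ref{lemma:inertia} and Corollary \ref{cor:Pic_stack}.

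For the construction of sections, I would fix a vector bundle $F$ on $C$ of rank $r_0$ and degree $d_F$ chosen so that $\chi(E \otimes F) = 0$ whenever $E$ has rank $r$ and degree $d$; the equation $r d_F + r_0 d + r r_0 (1 - g) = 0$ has an integer solution in $d_F$ precisely because $r_0 = r/\gcd(r,d)$. Pulling back the canonical theta section of \eqref{eq:theta} along the tensor-product morphism $\calM_{r, = L}^{\semistab} \to \calM_{r r_0, r r_0 (g-1)}^{\semistab}$, $\calE \mapsto \calE \otimes F$, yields a global section $\vartheta_F \in \rmH^0(\calM_{r, = L}^{\semistab}, \Theta_F^*)$ with $\Theta_F := \det \rmR \pr_{2,*}(\calE \otimes F)$. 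A $\mu_r$-weight computation together with the identification $\Pic(\calM_{r, = L}^{\semistab}) = \bbZ \cdot \Ldet$ pins down $\Theta_F^* \cong \Ldet^{\otimes -r_0}$.

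Next I would invoke Faltings's theorem: in any characteristic, for every semistable $E_0$ of rank $r$ and determinant $L$ there exists such an $F$ with $\vartheta_F(E_0) \neq 0$. Since each $\vartheta_F$ is constant on S-equivalence classes, its non-vanishing locus $U_F \subseteq \calM_{r, = L}^{\semistab}$ is S-equivalence saturated and descends to an open $V_F \subseteq \frakM_{r, L}^{\semistab}$; by Faltings these $V_F$ cover the coarse scheme. The ratios $\vartheta_F / \vartheta_{F'}$ are invertible regular functions on the overlaps $U_F \cap U_{F'}$, and by Corollary \ref{cor:O} they descend to invertible regular functions on $V_F \cap V_{F'}$. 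Gluing the trivial bundles on the $V_F$ via these transition functions produces the desired line bundle $\calL(\Theta)$ on $\frakM_{r, L}^{\semistab}$ with $\pi_{r, = L}^* \calL(\Theta) \cong \Ldet^{\otimes -r_0}$. Ampleness then follows because $\frakM_{r, L}^{\semistab}$ is projective with $\Pic \cong \bbZ$ (the Corollary immediately preceding the theorem), and $\calL(\Theta)$ is simultaneously effective (it carries the descended $\vartheta_F$) and nontrivial (its pullback is not), so its class is a strictly positive multiple of the ample generator.

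For part (ii), Lemma \ref{lemma:injective} embeds $\Pic(\frakM_{r, L}^{\semistab})$ into $\bbZ \cdot \Ldet$; by (i) the image contains $\Ldet^{\otimes -r_0}$, and by Lemma \ref{lemma:inertia} it cannot contain a smaller power, so $\calL(\Theta)$ is a generator. The stable-locus statement is proved identically, using the stable part of Lemma \ref{lemma:injective} together with Corollary \ref{cor:Pic_stack}(ii). The main obstacle is the descent in the second step: in characteristic zero one could appeal to Kempf's descent lemma, reducing the problem to triviality of the action of the automorphism group on fibers of $\Ldet^{\otimes -r_0}$ over polystable points; in positive characteristic this criterion can fail for non-reduced stabilizers such as $\mu_r$, and Faltings's theorem is invoked precisely to supply enough descending sections directly so as to circumvent Kempf's lemma.
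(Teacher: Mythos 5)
Your overall strategy is the paper's: pull back nonabelian theta functions along $E \mapsto E \otimes F$, use them to descend a power of $\Ldet$ through $\pi_{r, = L}$ via Corollary \ref{cor:O}, and then combine Corollary \ref{cor:Pic_stack} with Lemmas \ref{lemma:injective} and \ref{lemma:inertia}. But there is one genuine gap, at the step you yourself identify as the crux. You invoke ``Faltings's theorem'' in the form: for every semistable $E_0$ there exists $F$ of rank exactly $r_0 = r/\gcd(r,d)$ with $\rmH^0( E_0 \otimes F) = \rmH^1( E_0 \otimes F) = 0$. What Faltings and Seshadri \cite{seshadri_1993} actually prove is that the loci $\frakU_F$ with $\rank( F) = n$ cover $\frakM_{r, L}^{\semistab}$ only for $n$ a \emph{sufficiently large} multiple of $r_0$. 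At the minimal rank the statement fails in general: already for $r \mid d$ (so $r_0 = 1$) it would assert that the theta map has no base points, whereas Raynaud-type examples produce, for suitable $r$ and $g$, stable bundles $E$ with $\rmH^0( E \otimes \xi) \neq 0$ for \emph{every} line bundle $\xi$ of the relevant degree. So your construction only descends $\Ldet^{\otimes -n}$ for $n \gg 0$ a multiple of $r_0$, not $\Ldet^{\otimes -r_0}$ itself. The paper closes exactly this gap by descending $\Ldet^{\otimes -n}$ for two successive admissible values $n$ and $n + r_0$ and taking the difference of the two resulting line bundles; with that extra step your argument goes through unchanged (your gluing via the transition functions $\vartheta_F / \vartheta_{F'}$ is just an explicit form of the paper's descent via $(\pi_{r, = L})_*$).

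A smaller point: the $\mu_r$-weight computation does not ``pin down'' $\Theta_F^* \cong \Ldet^{\otimes -r_0}$. The weight of $\Ldet^{\otimes m}$ is $m d \bmod r$ and that of $\Theta_F$ is $\chi( E \otimes F) = 0$, so the weight argument only shows that $\Theta_F$ is \emph{some} (possibly zero) multiple of $\Ldet^{\otimes r_0}$. The identification $\Theta_F \cong \Ldet^{\otimes \rank( F)}$ on $\calM_{r, = L}$ needs a direct computation, e.g.\ that $\det \rmR \pr_{2, *}( \calE \otimes F)$ depends only on the class of $F$ in K-theory and on $\det( \calE)$, which is fixed here; the paper calls this ``easy to check'', and it is, but not by your route. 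Your ampleness argument and your deduction of part (ii) are fine.
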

\begin{proof}
  Let $F$ be a vector bundle of some rank $n \geq 1$ over $C$ such that
  \begin{equation*}
    \deg( F)/n + d/r = g-1.
  \end{equation*}
  Sending each pair $(E, \phi)$ to the vector bundle $E \otimes F$ defines a morphism of stacks
  \begin{equation*}
    \calM_{r, = L}^{\semistab} \longto \calM_{nr, nr(g-1)}.
  \end{equation*}
  It is easy to check that the pullback of $\Ldet$ along this morphism is isomorphic to $\Ldet^{\otimes n}$. Let us denote the pullback of the canonical section $\vartheta$ in \eqref{eq:theta} by
  \begin{equation*}
    \vartheta_F \in \rmH^0( \calM_{r, = L}^{\semistab}, \Ldet^{\otimes -n}).
  \end{equation*}
  The locus where $\vartheta_F$ does not vanish is by construction the open substack
  \begin{equation*}
    \calU_F \subseteq \calM_{r, = L}^{\semistab}
  \end{equation*}
  defined by the condition $\rmH^0( C, E \otimes F) = \rmH^1( C, E \otimes F) = 0$. S-equivalence preserves this condition, so $\calU_F$ is the inverse image of an open subscheme
  \begin{equation*}
    \frakU_F \subseteq \frakM_{r, L}^{\semistab}
  \end{equation*}
  since $\frakM_{r, L}^{\semistab}$ is a good quotient. The section $\vartheta_F$ trivializes the line bundle $\Ldet^{\otimes -n}$ over $\calU_F$.
  Using Corollary \ref{cor:O}, it follows that the sheaf $(\pi_{r, = L})_*( \Ldet^{\otimes -n})$ is isomorphic to the structure sheaf over $\frakU_F$, and that the canonical homomorphism
  \begin{equation*}
    (\pi_{r, = L})^* (\pi_{r, = L})_*( \Ldet^{\otimes -n}) \longto \Ldet^{\otimes -n}
  \end{equation*}
  is an isomorphism over $\calU_F$. According to \cite[Lemma 3.1 and Remark 3.2]{seshadri_1993}, we have
  \begin{equation*}
    \frakM_{r, L}^{\semistab} = \bigcup_{\rank( F) = n} \frakU_F
  \end{equation*}
  for every sufficiently large multiple $n$ of $r/\gcd(r, d)$; this was first shown by Faltings \cite{faltings_1993}.
  Under this assumption on $n$, it follows that $(\pi_{r, = L})_*( \Ldet^{\otimes -n})$ is a line bundle on $\frakM_{r, L}^{\semistab}$ which pulls back to $\Ldet^{\otimes -n}$ on $\calM_{r, = L}^{\semistab}$.
  Taking the difference of these line bundles for two successive values of $n$, we get a line bundle $\calL( \Theta)$ on $\frakM_{r, L}^{\semistab}$ with
  \begin{equation*}
     (\pi_{r, = L})^* \calL( \Theta) \cong \Ldet^{\otimes -r/\gcd( r, d)}
  \end{equation*}
  on $\calM_{r, = L}^{\semistab}$.
  This line bundle and its restriction generate $\Pic( \frakM_{r, L}^{\semistab})$ and $\Pic( \frakM_{r, L}^{\stab})$ due to Corollary \ref{cor:Pic_stack}, Lemma \ref{lemma:injective} and Lemma \ref{lemma:inertia}.
  In particular, $\calL( \Theta)$ or its dual is ample on $\Pic( \frakM_{r, L}^{\semistab})$.
  But we have also seen that every sufficiently large power of $\calL( \Theta)$ is globally generated; hence $\calL( \Theta)$, and not its dual, is ample.
\end{proof}
\begin{corollary}
  The projective variety $\frakM_{r, L}^{\semistab}$ is locally factorial.
\end{corollary}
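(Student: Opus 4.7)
The plan is to verify local factoriality via the standard criterion that, for a normal variety, it is equivalent to the equality $\Pic = \mathrm{Cl}$ of Picard and divisor class groups; since the inclusion $\Pic(\frakM_{r,L}^{\semistab}) \hookrightarrow \mathrm{Cl}(\frakM_{r,L}^{\semistab})$ is automatic, only surjectivity onto $\mathrm{Cl}$ needs to be proved. The key geometric input is that the stable locus $\frakM_{r,L}^{\stab}$ is smooth, being the geometric quotient of $Q_{r,\cong L}^{\stab}$ by the $\PGL_N$-action; freeness of this action on the stable locus holds because stable bundles have only scalar automorphisms, so that all $\PGL_N$-stabilizers are trivial. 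Hence $\mathrm{Cl}(\frakM_{r,L}^{\stab}) = \Pic(\frakM_{r,L}^{\stab})$, which by the preceding theorem is generated by the restriction of $\calL(\Theta)$, while the restriction $\Pic(\frakM_{r,L}^{\semistab}) \twoheadrightarrow \Pic(\frakM_{r,L}^{\stab})$ is surjective.

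The crux is then to show that the strictly semistable locus $\frakM_{r,L}^{\semistab} \setminus \frakM_{r,L}^{\stab}$ has codimension at least two. Each S-equivalence class there is represented by a polystable bundle $\bigoplus_i E_i^{\oplus m_i}$ with the $E_i$ stable of common slope $d/r$, and a dimension count analogous to the one at the end of the proof of Corollary \ref{cor:Pic_stack}, now with $d_1/r_1 = d/r$, yields the bound in every case except $(g,r) = (2,2)$ with $d$ even. In that exceptional case I would instead invoke the classical Narasimhan--Ramanan identification $\frakM_{2,L}^{\semistab} \cong \bbP^3$, from which smoothness, and hence local factoriality, is immediate.

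Given the codimension bound, purity of the divisor class group yields $\mathrm{Cl}(\frakM_{r,L}^{\semistab}) = \mathrm{Cl}(\frakM_{r,L}^{\stab}) = \Pic(\frakM_{r,L}^{\stab})$, and composing with the surjection of Picard groups from the preceding theorem shows that the canonical inclusion $\Pic(\frakM_{r,L}^{\semistab}) \hookrightarrow \mathrm{Cl}(\frakM_{r,L}^{\semistab})$ is also surjective, hence an isomorphism. The main obstacle will be the codimension estimate together with the separate treatment of the exceptional low-genus case in positive characteristic; once these are in hand, the rest of the argument is purely formal.
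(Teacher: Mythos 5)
Your proposal follows essentially the same route as the paper: the exceptional case $g = r = 2$ with $\deg(L)$ even is handled via the Narasimhan--Ramanan isomorphism $\frakM_{2,L}^{\semistab} \cong \bbP^3$, and in all other cases local factoriality is deduced from the codimension-two bound on the complement of $\frakM_{r,L}^{\stab}$ together with the fact that $\calL(\Theta)$ and its restriction generate $\Pic(\frakM_{r,L}^{\semistab})$ and $\Pic(\frakM_{r,L}^{\stab})$. The only difference is that you spell out the intermediate identifications $\mathrm{Cl} = \Pic$ on the smooth stable locus and sketch the dimension count behind the codimension estimate, both of which the paper leaves implicit.
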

\begin{proof}
  In the case $g = r = 2$ and $d = \deg( L)$ even, the moduli space $\frakM_{r, L}^{\semistab}$ is isomorphic to $\bbP^3$ by \cite{narasimhan-ramanan, bhosle}, and hence locally factorial.

  In all other cases, the complement of $\frakM_{r, L}^{\stab}$ has codimension $\geq 2$ in $\frakM_{r, L}^{\semistab}$.
  In particular, the Picard group of $\frakM_{r, L}^{\stab}$ coincides with the Picard group of the smooth locus of $\frakM_{r, L}^{\semistab}$.
  Every line bundle on this smooth locus can be extended to $\frakM_{r, L}^{\semistab}$ due to the previous theorem.
  This implies that $\frakM_{r, L}^{\semistab}$ is locally factorial.
\end{proof}

\begin{remark}
  The singularities of $\frakM_{r, L}^{\semistab}$ have been studied in \cite{balaji-mehta}. In particular, it is proved there that $\frakM_{r, L}^{\semistab}$ is Gorenstein.
\end{remark}

\bibliography{picardgroup}

\end{document}